 \title  [Interpolation on Certain Subalgebras of
$H^{\infty}(\D)$] {Nevanlinna-Pick Interpolation on Certain Subalgebras of $H^{\infty}(\D)$}
\author{Debendra P. Banjade and Jeremiah Dunivin}
\address{Department of Mathematics and Statistics\\
         Coastal Carolina University\\
         Box 261954\\
         Conway, SC 29528\\
         843-3496569}
\email{dpbanjade@coastal.edu, jdunivi1@coastal.edu}
\subjclass[2010]{Primary: 30H05; Secondary:32A38}
\keywords{Nevanlinna - Pick interpolation theorem, bounded analytic functions, sub algebras}
\newtheorem{remark}{Remark}
\newtheorem{thm}{Theorem}[section]
\newtheorem*{Bruno}{Bruno's Formula}
\newtheorem{prop}{Proposition}[section]
\newtheorem{cor}{Corollary}[section]
\newtheorem{Lemma}{Lemma}[section]
\newtheorem*{pick}{Nevanlinna-Pick Interpolation Theorem} 
\newtheorem*{david}{Interpolation Theorem on $H^{\infty}_{\{1\}}(\mathbb {D})$ (Davidson, Paulsen, Raghupathi, and Singh)}
    \newcommand{\C}{\mathbb{C}}
 \newcommand{\D}{\mathbb{D}} \newcommand{\Z}{\mathbb{Z}}
\newtheorem{definition}{Definition}
\newtheorem{exmp}{Example}[section]
\newtheorem{question}{Question}[section]
\begin{document}

\maketitle

\begin{abstract} 
 Given a collection $K$ of positive integers, let $H^{\infty}_K(\mathbb{D})$ denote the set of all bounded analytic functions defined on the unit disk $\mathbb{D}$ in $\C$ whose $k^{\text{th}}$ derivative vanishes at zero, for all $k \in K$. In this paper, we establish a Nevanlinna-Pick interpolation result for the subalgebra $H^{\infty}_K(\mathbb{D})$, where $K = \{1,2,\dotsc,k\}$, which is a slight generalization of the interpolation theorem that Davidson, Paulsen, Raghupathi, and Singh proved for the algebra $H^{\infty}_{\{1\}}(\mathbb{D})$. Furthermore, we provide a sufficient condition for an interpolation function to exist in the algebra $H^{\infty}_K(\mathbb{D})$ for a given $K$. Lastly, we give a necessary condition for the existence of such interpolation functions. 
\end{abstract}

\section{Introduction} 
The algebras we are interested  in this paper are defined as follows: \\
Let $ K \subset \Z_{+} $ and define $$ H_K^{\infty}(\D) = \lbrace f
\in H^{\infty}(\D) : f^{(k)}(0) = 0,\;  \text{ for \; all}\;  \;  k \in K \rbrace. $$ We consider those sets $ K $ for which $
H_K^{\infty}(\D) $ is an algebra under pointwise multiplication of functions. Obviously, not every set $ K
$ defines an algebra (for example, let $ K = \lbrace 2 \rbrace $). Though there is not a complete characterization of  the set $K$ for which $H_K^{\infty}(\D)$ is an algebra, Ryle and Trent \cite{ryledissertation} have given certain criteria that the set $K$ must meet. We state some of these criteria as Lemmas \ref{rylelemma} and \ref{Kinfinite} in Section 3.

Furthermore, we endow $H^{\infty}(\mathbb{D})$ with the usual supremum norm, which we define below.
\begin{definition}
\normalfont Let $f$ be a function defined on $\mathbb{D}$. The \textit{supremum norm} $||f||_{\infty}$ of $f$ is defined as the least upper bound for the set of all moduli of $f$. Formally, we write
$$||f||_{\infty} = \sup\{|f(z)| : z \in \mathbb{D}\}.$$
\end{definition}

Nevanlinna and Pick proved an interpolation result for the algebra $H^{\infty}(\mathbb{D})$ in \cite{Nevanlinna} and \cite{nikolski}. Their theorem shows that an interpolant exists in this algebra precisely when the associated Pick matrix is positive semidefinite. 

\begin {pick} Given distinct points $z_1, z_2, ..., z_n$ in the unit disk $\mathbb{D}$ in the complex plane and complex numbers $w_1, w_2, ..., w_n $ in $\mathbb{D},$ there exists a function $f$ in $H^{\infty}(\mathbb {D})$ with $f(z_i) = w_i$ for $i=1,2,\dotsc,n$ and $\Vert f \Vert _{\infty}  \leq 1$ if and only if the associated Pick matrix 
$$ \left [  \frac{1- w_i \overline{w_j}}{ 1 - z_i \overline{z_j} } \right] _ {i , j} \; \text{ is \; positive \; semidefinite}. $$
\end{pick}
 Davidson, Paulsen, Raghupathi, and Singh \cite{david} proved an interpolation result for  $H^{\infty}_{\{1\}}(\mathbb{D})$, the collection of all bounded, analytic functions $f$ on $\mathbb{D}$, satisfying the constraint $f^{\prime}(0) = 0$ (see Theorem 1.2 in \cite{david}).  Similar research on $H^{\infty}_{\{1\}}(\mathbb{D})$ can be found in \cite{Michael1} and \cite{Michael}.

\begin {david} \label{rags} Let  $z_1, z_2, ..., z_n$ be distinct points in  $\mathbb{D},$ and let  $w_1, w_2, ..., w_n,$ be in  $\mathbb{D}$. Then there  exists an analytic function  $f$ in $H^{\infty}_{\{1\}}(\D)$ with $ \Vert f \Vert _{\infty}  \leq 1$ such that $f(z_i) = w_i $ for $i = 1,2,\dotsc,n$  if and only if there exists $\lambda \in \mathbb{D} $ so that 
$$ \left [  \frac{z_i^2 \overline{z_j ^2}- \varphi _\lambda (w_i) \overline{ \varphi _\lambda (w_j)}  } { 1 - z_i \overline{z_j} } \right] _ {i , j} \; \text{ is \; positive \; semidefinite}, $$
where $\varphi_{\lambda}(z) = \frac{z-\lambda}{1-\overline{\lambda}z}$ denotes the elementary M\"{o}bius transformation.
\end{david}
 Observing that all functions in $H^{\infty}_{\{1\}}(\mathbb{D})$ have first derivatives that vanish at zero, it is natural to define our desired set as follows.
\begin{definition}
Let $K$ denote a nonempty collection of positive integers and define the set $H^{\infty}_K(\mathbb{D})$ as the set of all functions in $H^{\infty}(\mathbb{D})$ whose $k^{\text{th}}$ derivative vanishes at zero, for all $k \in K$. Formally, define
$$ H_K^{\infty}(\D) := \lbrace f
\in H^{\infty}(\D) : f^{(k)}(0) = 0,\;  \text{ for all}\;  \;  k \in K \rbrace. $$
\end{definition}
\noindent We assume that  $H^{\infty}_K(\mathbb{D})$ is an algebra. As we shall see in Section 2, this assumption is necessary.

  Although \cite{david} is the primary inspiration for this work, Carleson's corona theorem and Wolff's problem of ideals have also been established for $H^{\infty}_K(\mathbb{D})$, see \cite{ryle} and \cite{banjade}, respectively. Thus, we found it plausible that a similar interpolation condition holds for the subalgebra $H^{\infty}_K(\mathbb{D})$.   Indeed, we will prove several interpolation results for $H^{\infty}_K(\mathbb{D})$.
  
  In the next section, we will briefly consider examples of sets $K$ that may or may not yield an algebra, demonstrating that we must make the assumption that $H^{\infty}_K(\mathbb{D})$ is an algebra. Section 3 is devoted to all results that will be used to assist us in proving Theorems \ref{newtheorem1}, \ref{newtheorem2}, and \ref{newtheorem3} and Corollaries \ref{newcorollary1} and \ref{newcorollary2} in Section 4 of this paper.
  
\section{The Subalgebra $H^{\infty}_K(\mathbb{D})$}\label{sect2}
We consider those sets $K$ for which $H^{\infty}_K(\mathbb{D})$ is an algebra. As the next example shows, not all sets $K$ yield an algebra.
\begin{exmp}
\normalfont The set $H^{\infty}_{\{2\}}(\mathbb{D})$ is not an algebra, because it fails to be closed under pointwise multiplication. Consider the functions $f(z) = z$ and $g(z)  = z$ on $\mathbb{D}$. Clearly, $f$ and $g$ are analytic and bounded, so $f,g \in H^{\infty}(\mathbb{D})$. Further, $f^{\prime\prime}(0) = g^{\prime\prime}(0) = 0$, so $f,g \in H^{\infty}_{\{2\}}(\mathbb{D})$. But for the product $f(z)g(z) = z^2$, we have $\big(fg)^{\prime\prime}(z) = 2$, which does not vanish at zero. Thus, $fg \notin H^{\infty}_{\{2\}}(\mathbb{D})$, so it is not closed under pointwise multiplication. Therefore,  $H^{\infty}_{\{2\}}(\mathbb{D})$ is not an algebra.
\end{exmp}
\noindent However, the set $H^{\infty}_{\{1,3\}}(\mathbb{D})$ is an algebra.
\begin{exmp}
\normalfont Let $f, g \in H^{\infty}_{\{1,3\}}(\mathbb{D})$. For the sum $f+g$, we have
$$(f+g)^{\prime}(0) = f^{\prime}(0) + g^{\prime}(0) = 0 \hspace{3mm} \text{ and } \hspace{3mm} (f+g)^{\prime\prime\prime}(0) = f^{\prime\prime\prime}(0) + g^{\prime\prime\prime}(0) = 0,$$
so $f+g \in H^{\infty}_{\{1,3\}}(\mathbb{D})$. For the first derivative of the product $fg$,
$$(fg)^{\prime}(z) = f^{\prime}(z) g(z) + f(z) g^{\prime}(z),$$
which implies that
\begin{flalign*}
(fg)^{\prime}(0) &= f^{\prime}(0)g(0) + f(0) g^{\prime}(0)\\
&= 0 \cdot g(0) + f(0) \cdot 0\\
&= 0.
\end{flalign*}
Similarly, the third derivative of $fg$ yields
$$(fg)^{\prime\prime\prime}(z) = f^{\prime\prime\prime}(z)g(z) + 3f^{\prime\prime}(z)g^{\prime}(z) + 3f^{\prime}(z)g^{\prime\prime}(z) + f(z)g^{\prime\prime\prime}(z),$$
and thus
\begin{flalign*}
(fg)^{\prime\prime\prime}(0) &= f^{\prime\prime\prime}(0)g(0) + 3f^{\prime\prime}(0)g^{\prime}(0) + 3f^{\prime}(0)g^{\prime\prime}(0) + f(0)g^{\prime\prime\prime}(0)\\
&= 0 \cdot g(0) + 3f^{\prime\prime}(0) \cdot 0 + 3 \cdot 0 \cdot g^{\prime\prime}(0) + f(0) \cdot 0 \\
&= 0.
\end{flalign*}
Therefore, $fg \in H^{\infty}_{\{1,3\}}(\mathbb{D})$. Since $H^{\infty}_{\{1,3\}}(\mathbb{D})$ is closed under both poinwise multiplication and addition, we conclude that it is an algebra.
\end{exmp}

The above examples show that not every $K$ yields an algebra. A complete characterization of such sets $K$ is an open problem worthy of investigation. Nevertheless, we will proceed to assume that a $K$ is given such that $H^{\infty}_K(\mathbb{D})$ is an algebra. 
\section{Preliminaries}\label{sect3}

Ryle and Trent have conducted a thorough analysis on the subalgebra $H^{\infty}_K(\mathbb{D})$, revealing the rich structure and interesting properties it possesses (see Chapter 4 in \cite{ryledissertation}). Furthermore, they provide necessary criteria that $K$ must satisfy whenever $H^{\infty}_K(\mathbb{D})$ is an algebra. We catalog some of these attributes below (see Lemmas 4.2.1, 4.2.2, 4.2.4, and Corollary 4.2.1 in \cite{ryledissertation}).

\begin{Lemma}{(\textbf{Ryle and Trent})}\label{rylelemma}
Let $K\subset \mathbb{Z}_+$ such that $ H_K^{\infty}(\D)$ is an algebra. Then 
\begin{align*}
& (i) \;\; {k_0}\not\in K \;\; \text{if and only if } \;\; \varphi (z)=z^{k_0}\in  H_K^{\infty}(\D).\\
& (ii)\text{ If} \;\; j ,k \notin K, \text{then}\;\;  j+k \notin K.\\
& (iii) \text{ If} \;\; j \notin K, \text{then}\;\; nj \notin K \;\; \text{ for all} \;\; n \geq 2.\\
& (iv) \text{ Suppose} \;\; {k_0}\in K. \; \text{If}\;\;  1<j< k_0 \;\; \text{satisfies} \;\; j\notin K,\;\;  \text{then}\;\; k_0-j\in K.
\end{align*}
\end{Lemma}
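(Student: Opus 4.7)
The plan is to first establish part (i), which is essentially a direct Taylor series computation, and then leverage (i) together with closure of $H^\infty_K(\D)$ under multiplication to deduce parts (ii), (iii), and (iv) in an almost mechanical fashion. The underlying insight is that the monomials $\varphi(z) = z^{k_0}$ serve as ``test functions'' that witness exactly whether $k_0$ lies in $K$: because only one of its derivatives at $0$ is nonzero, membership of $z^{k_0}$ in the subalgebra is equivalent to a single condition on $K$.

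For (i), I would compute $\varphi^{(k)}(z) = k_0 (k_0-1) \cdots (k_0 - k + 1)\, z^{k_0-k}$ for $k \leq k_0$, which equals $k_0!$ at $z=0$ when $k = k_0$ and vanishes at $z=0$ for every other $k \geq 0$. Hence $\varphi \in H^\infty_K(\D)$ (which for this polynomial reduces to $\varphi^{(k)}(0)=0$ for all $k\in K$) holds if and only if $k_0 \notin K$.

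With (i) in hand, the remaining parts fall out by combining membership of monomials with the algebra hypothesis. For (ii), assume $j, k \notin K$; part (i) places $z^j$ and $z^k$ in $H^\infty_K(\D)$, their product $z^{j+k}$ belongs to $H^\infty_K(\D)$ since the latter is an algebra, and then (i) applied in the reverse direction yields $j+k \notin K$. Part (iii) is then a straightforward induction on $n$: the case $n=2$ is just (ii) with $j=k$, and the inductive step applies (ii) to $nj \notin K$ and $j \notin K$. For (iv), I would argue by contradiction: if both $j \notin K$ and $k_0 - j \notin K$, then (i) puts $z^j$ and $z^{k_0 - j}$ in $H^\infty_K(\D)$, and multiplying gives $z^{k_0} \in H^\infty_K(\D)$; (i) once more forces $k_0 \notin K$, contradicting the hypothesis $k_0 \in K$.

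There is not really a substantive obstacle once (i) is in place, since each subsequent part is a one-line deduction from the algebra axiom applied to monomials. The only mild subtlety worth flagging is that (i) is genuinely a biconditional and must be established in both directions, but both implications follow immediately from the same derivative calculation; everything else is bookkeeping.
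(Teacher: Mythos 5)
Your proof is correct: part (i) is an immediate derivative computation for the monomial $z^{k_0}$, and parts (ii)--(iv) then follow exactly as you say from closure of $H^\infty_K(\mathbb{D})$ under multiplication together with both directions of (i). Note, however, that the paper itself offers no proof of this lemma --- it is quoted verbatim from Ryle and Trent (Lemmas 4.2.1, 4.2.2, 4.2.4 and Corollary 4.2.1 of \cite{ryledissertation}) --- so there is no in-paper argument to compare against; your ``monomials as test functions'' route is the standard one and fills that gap cleanly.
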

 Given an algebra $H^{\infty}_K(\mathbb{D})$, the following lemma demonstrates that some collection of positive integers absent from $K$ exists. (see Corollay $1$ in \cite{ryle2}).
\begin{Lemma}(\textbf{Ryle and Trent)}\label{Kinfinite} If $ H_K^{\infty}(\D) $ is an algebra, then there exists $ d \in \mathbb{Z}_+ $, a finite
set $ \lbrace n_i \rbrace_{i=1}^p \subset \mathbb{Z}_+ $ with $ n_1 < \dots < n_p $ and $ gcd(n_1,
\dots ,n_p) = 1 $, and a positive integer $ N_0 > n_p $ so that \begin{equation*} \mathbb{Z}_+ \setminus K =
\lbrace n_1d, n_2d, \dots , n_pd, N_0d, (N_0 + j)d : j \in \mathbb{Z}_+ \rbrace \text{.}
\end{equation*} 
\end{Lemma}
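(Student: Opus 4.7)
The plan is to recognize $S := \mathbb{Z}_+ \setminus K$ as an additively closed subset of $\mathbb{Z}_+$ and then apply a Frobenius-type structure theorem for numerical semigroups. First I would dispose of the trivial case $S = \emptyset$ (which forces $K = \mathbb{Z}_+$ and $H_K^\infty(\D) = \mathbb{C}$) and assume throughout that $S \neq \emptyset$. By Lemma \ref{rylelemma}(ii), if $j, k \in S$ then $j + k \in S$; that is, $S$ is closed under addition. Setting $d = \gcd(S)$, every element of $S$ is a multiple of $d$, so I can write $S = d S'$, where $S' := \{s/d : s \in S\} \subset \mathbb{Z}_+$ is again closed under addition and now satisfies $\gcd(S') = 1$.

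The next step is to show that $\mathbb{Z}_+ \setminus S'$ is finite. Since $\gcd(S') = 1$, by Bezout there is a finite subcollection $m_1, \dots, m_r \in S'$ with $\gcd(m_1, \dots, m_r) = 1$. Closure of $S'$ under addition then places every nonzero nonnegative integer combination $a_1 m_1 + \cdots + a_r m_r$ inside $S'$. The Sylvester--Frobenius (``Chicken McNugget'') theorem guarantees that every sufficiently large positive integer admits such a representation, so $\mathbb{Z}_+ \setminus S'$ is a finite set.

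To finish, I would choose the threshold $N_0$ carefully. Let $M$ be any integer such that $\{M, M+1, M+2, \dots\} \subset S'$, and set $N_0 = \max(M, m_r + 1)$; then the tail $\{N_0, N_0 + 1, N_0 + 2, \dots\}$ lies entirely in $S'$, and the strict inequality $N_0 > m_r$ forces $m_1, \dots, m_r \in S' \cap [1, N_0)$. Enumerating $S' \cap [1, N_0)$ in increasing order as $n_1 < n_2 < \cdots < n_p$, one has $n_p < N_0$ by construction, and
\[
\gcd(n_1, \dots, n_p) \,\bigm|\, \gcd(m_1, \dots, m_r) = 1,
\]
so $\gcd(n_1, \dots, n_p) = 1$. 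Since $S' = \{n_1, \dots, n_p\} \cup \{N_0, N_0+1, \dots\}$, multiplying through by $d$ delivers the asserted decomposition of $\mathbb{Z}_+ \setminus K$.

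The only nontrivial input is the Frobenius-type cofiniteness statement: any additively closed subset of $\mathbb{Z}_+$ whose elements have $\gcd = 1$ has finite complement in $\mathbb{Z}_+$. This is classical and can even be reduced to the two-variable Chicken McNugget formula once a coprime pair has been extracted from the finite $\gcd$-$1$ family. Beyond invoking this fact, the argument is pure bookkeeping---in particular, the slight enlargement of $N_0$ past $M$ (to guarantee the $\gcd$-$1$ condition on the finite part) is the only delicate point, and I expect no serious obstacle.
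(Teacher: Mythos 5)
The paper does not prove this lemma at all: it is quoted verbatim from Ryle and Trent (Corollary 1 of \cite{ryle2}), so there is no in-paper argument to compare against. Your proof is correct and self-contained given Lemma \ref{rylelemma}(ii), and it is the natural one: $S=\mathbb{Z}_+\setminus K$ is closed under addition, so after factoring out $d=\gcd(S)$ you have a numerical semigroup with $\gcd$ equal to $1$, whose complement in $\mathbb{Z}_+$ is finite by the classical Frobenius/Schur cofiniteness theorem; the bookkeeping with $N_0=\max(M,m_r+1)$ correctly secures both $N_0>n_p$ and $\gcd(n_1,\dots,n_p)=1$, since the $m_i$ all land in $S'\cap[1,N_0)$. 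Two small remarks. First, you are right that the case $K=\mathbb{Z}_+$ (where $H^\infty_K(\D)=\mathbb{C}$ is an algebra but $S=\emptyset$) must be excluded; the lemma as stated silently assumes this, and your explicit disposal of it is a genuine improvement in precision. Second, your closing aside about reducing to the two-variable Chicken McNugget formula ``once a coprime pair has been extracted'' is slightly too glib: a $\gcd$-$1$ family need not contain a coprime pair (e.g.\ $\{6,10,15\}$), so one either invokes the general $r$-variable cofiniteness statement directly or uses closure under addition to manufacture a coprime pair first. This does not affect the validity of the proof, since the general statement is the classical fact you actually cite.
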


These lemmas show that every function $f \in H_K^{\infty}(\D) $ has the following power series representation: 
\begin{flalign*} 
f(z) &= \alpha_0 + \alpha_{1}\,z^{n_1d} + \alpha_{2}\,z^{n_2d} + 
\dots + \alpha_{p}\,z^{n_pd} + \alpha_{p+1}\,z^{N_0d} +  \sum_{j=1}^{\infty}\alpha_{p+j+1}\,z^{(N_0+j)d},
\end{flalign*} 
where the coefficient $ \alpha_i
\in \C $. In order to improve our understanding of these results, we provide a couple of  examples below.

\begin{exmp}\normalfont
\text{\hspace{3mm}}

\vspace{1mm}

\begin{itemize}
    \item[i.] If $f \in H^{\infty}_{\{1\}}(\mathbb{D})$, then the power series representation of $f$ is 
    $$f(z) = \alpha_0 + \alpha_2 z^2 + \alpha_3z^3 + \alpha_4z^4 + \sum_{i=5}^{\infty}\alpha_iz^i.$$
    \item[ii.] If $f \in H^{\infty}_{\{1,3\}}(\mathbb{D})$, then the power series representation of $f$ is  
    $$f(z) = \alpha_0 + \alpha_2z^2 + \alpha_4z^4 + \alpha_5z^5 +\sum_{i=6}^{\infty}\alpha_iz^i.$$
\end{itemize}
\end{exmp}
Because we will be taking derivatives of function compositions, we will be using Bruno's Formula below (see \cite{johnson} for more information).
\begin{Bruno}
Let $f$ and $g$ denote functions with a sufficient number of derivatives. Then the $k^{\text{th}}$ derivative of the composition $g\circ f$ is 
$$\frac{d^k}{dz^k}\Big[ g(f(z)) \Big] = \sum \frac{k!}{b_1! \, b_2! \, \cdots \, b_k!} \cdot g^{(b)}(f(z)) \cdot \prod_{\ell=1}^k \Big(\frac{f^{(\ell)}(z)}{\ell!}\Big)^{b_{\ell}}$$
where the sum is over all $k$-tuples of nonnegative integers $(b_1,\dotsc,b_k)$ satisfying the constraint
$$\sum_{\ell=1}^k \ell\,b_{\ell} = k,
$$
and $b:=b_1+b_2+\cdots+b_k.$
\end{Bruno}


 We use Bruno's formula in the following example, which serves as a motivation for Lemmas \ref{jay1} and \ref{jay2} below.

\begin{exmp}\label{example}
\normalfont When $k=3$, Bruno's formula shows that we take the triples $(b_1,b_2,b_3)$ satisfying the equation $b_1 + 2b_2 + 3b_3 = 3.$ Clearly, the only solutions are $(3,0,0)$, $(1,1,0)$ and $(0,0,1)$. Given $(3,0,0)$, the first term has the form
$$\frac{3!}{3! 0! 0!} \cdot g^{(3+0+0)}(f(z)) \cdot \bigg(\frac{f^{(1)}(z)}{1!}\bigg)^{3} \bigg(\frac{f^{(2)}(z)}{2!}\bigg)^0 \bigg(\frac{f^{(3)}(z)}{3!}\bigg)^0 = g^{(3)}(f(z))\big[f^{(1)}(z)\big]^3.$$
Further, the solution $(1,1,0)$ yields the term
$$\frac{3!}{1! 1! 0!} \cdot g^{(1 + 1 + 0)}(f(z)) \cdot \bigg(\frac{f^{(1)}(z)}{1!}\bigg)^{1} \bigg(\frac{f^{(2)}(z)}{2!}\bigg)^1 \bigg(\frac{f^{(3)}(z)}{3!}\bigg)^0 = 3g^{(2)}(f(z))f^{(1)}(z)f^{(2)}(z).$$
Finally, the solution $(0,0,1)$ provides the last term of the form
$$\frac{3!}{0! 0! 1!} \cdot g^{(0 + 0 + 1)}(f(z)) \cdot \bigg(\frac{f^{(1)}(z)}{1!}\bigg)^{0} \bigg(\frac{f^{(2)}(z)}{2!}\bigg)^0 \bigg(\frac{f^{(3)}(z)}{3!}\bigg)^1 = g^{(1)}(f(z))f^{(3)}(z).$$
Therefore,
$$(g\circ f)^{(3)}(z) = g^{(3)}(f(z))\big[f^{(1)}(z)\big]^3 + 3g^{(2)}(f(z))f^{(1)}(z)f^{(2)}(z) + g^{(1)}(f(z))f^{(3)}(z).$$

Note that each solution to the constraint generates a term in the above equation. Now, consider the algebra $H^{\infty}_{\{1,3\}}(\mathbb{D})$. If $f \in H^{\infty}_{\{1,3\}}(\mathbb{D}),$ we see from the first three equations that a nonzero component $b_j$ in $(b_1,b_2,b_3)$ exists such that $j \in \{1,3\}$. We show that this is always true.
\end{exmp}
\begin{Lemma}\label{jay1}
Suppose $K$ is a set of positive integers such that $H^{\infty}_K(\mathbb{D})$ is an algebra. If $k \in K$, then for every $k$-tuple $(b_1,b_2,\dotsc,b_k)$ satisfying
$$\sum_{\ell=1}^k \ell \, b_{\ell} = k,$$
there exists an $\ell_j \in \{\ell_1, \ell_2, \dotsc, \ell_t\}$ such that $\ell_j \in K$, where $b_{\ell_1}, b_{\ell_2}, \dotsc, b_{\ell_t}$ are the nonzero components of $(b_1, b_2, \dotsc, b_k)$.
\end{Lemma}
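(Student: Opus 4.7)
The plan is to prove the contrapositive by reduction to Ryle and Trent's closure property: the sum of two elements of $\mathbb{Z}_+ \setminus K$ is again in $\mathbb{Z}_+ \setminus K$ (Lemma \ref{rylelemma}(ii)). The statement of the lemma is essentially saying that $k \in K$ cannot be expressed as a linear combination (with nonnegative integer coefficients) of positive integers lying entirely outside $K$. This is precisely what Lemma \ref{rylelemma}(ii) prevents, once iterated.

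First, I would fix $k \in K$ together with a $k$-tuple $(b_1, b_2, \dotsc, b_k)$ of nonnegative integers satisfying $\sum_{\ell=1}^k \ell\, b_\ell = k$, and let $\ell_1 < \ell_2 < \cdots < \ell_t$ denote the indices corresponding to the nonzero components $b_{\ell_1}, \dotsc, b_{\ell_t}$. Since $k \geq 1$, this list is nonempty. Suppose, toward a contradiction, that $\ell_j \notin K$ for every $j \in \{1,\dotsc,t\}$. Rewriting the constraint gives
\begin{equation*}
k \;=\; \sum_{j=1}^{t} b_{\ell_j}\, \ell_j \;=\; \underbrace{\ell_1 + \cdots + \ell_1}_{b_{\ell_1} \text{ times}} + \underbrace{\ell_2 + \cdots + \ell_2}_{b_{\ell_2} \text{ times}} + \cdots + \underbrace{\ell_t + \cdots + \ell_t}_{b_{\ell_t} \text{ times}},
\end{equation*}
expressing $k$ as a sum of positive integers, each of which lies in $\mathbb{Z}_+ \setminus K$.

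Next, I would establish by a short induction on the number of summands that any finite sum of elements of $\mathbb{Z}_+ \setminus K$ again lies in $\mathbb{Z}_+ \setminus K$. The base case (two summands) is exactly Lemma \ref{rylelemma}(ii); the inductive step groups the first $m$ summands (not in $K$ by hypothesis) with the $(m{+}1)$-st summand (not in $K$ by assumption) and applies Lemma \ref{rylelemma}(ii) once more. Applying this induction to the sum above forces $k \in \mathbb{Z}_+ \setminus K$, which contradicts our assumption $k \in K$. Hence at least one $\ell_j$ must belong to $K$, as required.

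I do not expect any real obstacle here; the content of the lemma is essentially a restatement of the additive closure property provided by Lemma \ref{rylelemma}(ii), and the only care required is to note that the constraint $\sum \ell b_\ell = k \geq 1$ forces at least one nonzero $b_\ell$, so that the list $\{\ell_1, \dotsc, \ell_t\}$ is nonempty and the contradiction is genuine.
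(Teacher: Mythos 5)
Your proof is correct and follows essentially the same route as the paper: argue by contradiction, observe that $k=\sum_j \ell_j b_{\ell_j}$ is built from integers outside $K$, and invoke the closure of $\mathbb{Z}_+\setminus K$ under addition (Lemma \ref{rylelemma}(ii)) to conclude $k\notin K$. The only cosmetic difference is that the paper first applies Lemma \ref{rylelemma}(iii) to get $\ell_j b_{\ell_j}\notin K$ and then sums over $j$ with (ii), whereas you unpack each multiple into $b_{\ell_j}$ repeated summands and iterate (ii) alone; both are sound.
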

\begin{proof}
Let $k \in K$. Assume instead that $\ell_j \notin K,$ for all $\ell_j \in \{\ell_1,\ell_2,\dotsc,\ell_t\}$. By Lemma \ref{rylelemma}.(iii), the product $\ell_j \, b_{\ell_j} \notin K$. Further, 
$$\sum_{j=1}^t \ell_j \, b_{\ell_j} \notin K,$$
by Lemma \ref{rylelemma}.(ii). Since $b_m = 0$ for each $m \in \{1,2,\dotsc, k\}\setminus\{\ell_1,\ell_2,\dotsc,\ell_t\}$,

\begin{flalign*}
\sum_{\ell = 1}^k \ell \, b_{\ell} = \sum_{j=1}^t \ell_j \, b_{\ell_j} = k \notin K,
\end{flalign*}
which is a contradiction, as we know $k \in K$.\\
\end{proof}

\begin{Lemma}\label{jay2}
Suppose that  $H^{\infty}_K(\mathbb{D})$ is an algebra.  If $g \in H^{\infty}(\mathbb{D})$ and $f \in H^{\infty}_K(\mathbb{D})$ with $f(\mathbb{D})\subset\mathbb{D}$, then $g \circ f \in H^{\infty}_K(\mathbb{D})$.
\end{Lemma}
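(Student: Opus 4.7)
The plan is to verify the two defining conditions of $H^{\infty}_K(\mathbb{D})$ for the composition $g\circ f$: (a) membership in $H^{\infty}(\mathbb{D})$, and (b) vanishing of the $k$-th derivative at $0$ for every $k\in K$. Part (a) is immediate: $g\circ f$ is analytic on $\mathbb{D}$ as a composition of analytic functions (using the hypothesis $f(\mathbb{D})\subset\mathbb{D}$ so that the composition is defined), and it is bounded by $\|g\|_\infty$ since $f$ maps into $\mathbb{D}$ where $g$ is defined and bounded.

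The substance of the proof lies in part (b), and this is where Bruno's formula and Lemma \ref{jay1} do all the work. I would fix $k \in K$ and write out Bruno's formula for $(g\circ f)^{(k)}(z)$ evaluated at $z=0$:
\begin{equation*}
(g\circ f)^{(k)}(0) \;=\; \sum \frac{k!}{b_1!\,b_2!\cdots b_k!}\; g^{(b)}(f(0))\;\prod_{\ell=1}^{k}\left(\frac{f^{(\ell)}(0)}{\ell!}\right)^{b_\ell},
\end{equation*}
summed over $k$-tuples $(b_1,\ldots,b_k)$ of nonnegative integers with $\sum_{\ell=1}^k \ell\,b_\ell = k$. The strategy is to show that every single term in this sum vanishes.

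Fix such a tuple and let $\{\ell_1,\ldots,\ell_t\}$ be the indices where $b_\ell$ is nonzero. By Lemma \ref{jay1}, at least one of these indices, say $\ell_j$, lies in $K$. Since $f\in H^{\infty}_K(\mathbb{D})$ and $\ell_j\in K$, we have $f^{(\ell_j)}(0)=0$, so the factor $\bigl(f^{(\ell_j)}(0)/\ell_j!\bigr)^{b_{\ell_j}}$ equals $0$ (note $b_{\ell_j}\geq 1$, so we really do get $0$ and not $0^0$). This kills the entire product, hence the entire term. Summing zero terms gives $(g\circ f)^{(k)}(0)=0$, and since $k\in K$ was arbitrary, $g\circ f\in H^{\infty}_K(\mathbb{D})$.

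I do not expect any real obstacle here; the work has already been done in Lemma \ref{jay1}, and Bruno's formula simply packages the combinatorial bookkeeping. The only mildly delicate point is making sure one interprets the exponent correctly: the critical factor $\bigl(f^{(\ell_j)}(0)/\ell_j!\bigr)^{b_{\ell_j}}$ is raised to a strictly positive integer power precisely because $\ell_j$ was chosen among the indices with $b_{\ell_j}\neq 0$, so the vanishing is genuine rather than an indeterminate form.
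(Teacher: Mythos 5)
Your proof is correct and follows essentially the same route as the paper's: apply Bruno's formula to $(g\circ f)^{(k)}$ at $0$, invoke Lemma \ref{jay1} to find a nonzero index $\ell_j\in K$ in each term, and conclude every term vanishes. Your added remarks (boundedness of $g\circ f$ by $\|g\|_\infty$, and the observation that $b_{\ell_j}\geq 1$ so the vanishing factor is a genuine positive power) are minor refinements of the same argument.
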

\begin{proof}
Let $g \in H^{\infty}(\mathbb{D})$ and $f \in H^{\infty}_K(\mathbb{D})$. Then $g \circ f \in H^{\infty}(\mathbb{D})$. If $k \in K$, Bruno's formula allows us to write 
$$\big(g(f(z)))^{(k)} = \sum \frac{k!}{b_1! \, b_2! \, \cdots \, b_k!} \cdot g^{(b)}(f(z)) \cdot \prod_{\ell=1}^k \Big(\frac{f^{(\ell)}(z)}{\ell!}\Big)^{b_{\ell}}$$
where the sum is over all $k$-tuples of nonnegative integers $(b_1,\dotsc,b_k)$ satisfying the constraint
$$\sum_{\ell=1}^k \ell\,b_{\ell} = k,
$$
and $b:=b_1+b_2+\cdots+b_k.$ For each solution $(b_1,b_2,\dotsc,b_k)$, there exist nonzero components $b_{\ell_1}, b_{\ell_2}, \dotsc, b_{\ell_t}$ in $(b_1,b_2,\dotsc,b_k)$, because $k>0$. By Lemma \ref{jay1}, there exists an $\ell_j \in \{\ell_1, \ell_2, \dotsc, \ell_t\}$ such that $\ell_j \in K$, and thus
$$\bigg(\frac{f^{(\ell_j)}(z)}{\ell_j!}\bigg)^{b_{\ell_j}} = 0 \; \text{ at }\; z = 0.$$
Therefore, every term in the sum vanishes at $z=0$, so $(g(f(0)))^{(k)} = 0.$ Hence, $g \circ f \in H^{\infty}_K(\mathbb{D})$.
\end{proof}

The following propositions will be used to prove Lemma \ref{jay4} below. We first show that if $H^{\infty}_K(\mathbb{D})$ is an algebra, $1 \in K$.
\begin{prop}\label{newlemma1}
If $K\subset \mathbb{Z}_+$ is nonempty such that $H^{\infty}_K(\mathbb{D})$ is an algebra, then $1 \in K$.
\end{prop}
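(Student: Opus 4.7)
The plan is a short proof by contradiction, relying entirely on part (iii) of Lemma \ref{rylelemma} (Ryle and Trent). Suppose, toward a contradiction, that $1 \notin K$. I would then apply Lemma \ref{rylelemma}.(iii) with $j = 1$: since $1 \notin K$, it follows that $n \cdot 1 = n \notin K$ for every integer $n \geq 2$. Combined with the assumption $1 \notin K$, this forces $n \notin K$ for every $n \in \mathbb{Z}_+$, i.e., $K = \emptyset$. This contradicts the hypothesis that $K$ is nonempty, so we must have $1 \in K$.

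As a sanity check, one can reach the same conclusion through part (i) of Lemma \ref{rylelemma}: if $1 \notin K$, then $\varphi(z) = z$ lies in $H^{\infty}_K(\mathbb{D})$, and since $H^{\infty}_K(\mathbb{D})$ is closed under multiplication, every power $z^n$ belongs to $H^{\infty}_K(\mathbb{D})$. Applying part (i) in the reverse direction shows that no positive integer can lie in $K$, again forcing $K = \emptyset$. Either route yields the contradiction immediately.

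There is no real obstacle here; the main thing to watch for is simply that one invokes the correct part of Lemma \ref{rylelemma} and that the nonemptiness hypothesis on $K$ is used to close the argument. The proof is essentially a one-line deduction from the structural results of Ryle and Trent, and its purpose in the paper is to ensure that the constraint $f'(0) = 0$ is always present, which will be needed as a normalization when proving Lemma \ref{jay4} and the subsequent interpolation theorems.
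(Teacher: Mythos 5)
Your argument is correct, but it takes a genuinely different route from the paper. The paper's proof lets $k$ be the smallest element of $K$ (using nonemptiness there), notes $k-1 \notin K$, and then splits into two cases: if $k-1=1$ it derives a contradiction from Lemma \ref{rylelemma}.(i) together with closure under multiplication (since $z \in H^{\infty}_K(\mathbb{D})$ would force $z^2 \in H^{\infty}_K(\mathbb{D})$ while $2 \in K$), and if $k-1>1$ it applies Lemma \ref{rylelemma}.(iv) with $k_0 = k$ and $j = k-1$ to conclude $1 \in K$ directly. Your proof instead applies Lemma \ref{rylelemma}.(iii) with $j=1$: from $1 \notin K$ you get $n \notin K$ for all $n \geq 2$, hence $K = \emptyset$, contradicting nonemptiness. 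This is shorter, avoids the case analysis entirely, and uses only one part of the Ryle--Trent lemma; your alternative via part (i) and closure under products is essentially the mechanism underlying part (iii) anyway, so the two versions of your argument are really the same idea at different levels of abstraction. Both your route and the paper's are valid; the paper's has the minor feature of exercising parts (i) and (iv), which it also relies on elsewhere, but nothing in the later development depends on which proof is used here.
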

\begin{proof}
Assume to the contrary that $1 \notin K$. Let $k$ be the smallest positive integer in $K$. Then $k-1 \notin K$ where $1 \leq k-1$. If $k-1 = 1$, then $k = 2 \in K$. By Lemma \ref{rylelemma}.(i), $z \in H^{\infty}_K(\mathbb{D})$ and $z^2 \notin H^{\infty}_K(\D)$. However, $H^{\infty}_K(\mathbb{D})$ is an algebra, so $z\cdot z = z^2 \in H^{\infty}_K(\mathbb{D})$, a contradiction. Therefore, $1 < k-1$. But since $k-1 < k$ with $k-1 \notin K$ and $k \in K$, Lemma \ref{rylelemma}.(iv) implies that $k-(k-1) = 1 \in K$, another contradiction. Thus, we must have $1 \in K$. 
\end{proof}
If $K$ only has consecutive integers, then either $K$ is finite, containing all positive integers less than some integer, or $K = \mathbb{Z}_+$.

\begin{prop}\label{newlemma2}
Suppose $H^{\infty}_K(\mathbb{D})$ is an algebra. If $K$ only contains consecutive integers, then $K = \{1,2,\dotsc,k\}$ for some integer $k$, or $K = \mathbb{Z}_+$. 
\end{prop}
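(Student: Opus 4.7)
The plan is to combine Proposition \ref{newlemma1} with the consecutive-integers hypothesis. First, I would invoke Proposition \ref{newlemma1} to conclude that $1 \in K$, which already forces any consecutive block in $K$ that contains $1$ to start at $1$ (since $K \subset \mathbb{Z}_+$ and no positive integer is smaller than $1$).

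Next, I would interpret the hypothesis that $K$ contains only consecutive integers to mean that the elements of $K$, listed in increasing order, form a run $m, m+1, m+2, \dotsc$ with no gaps. Combined with $1 \in K$, this pins the starting index at $m = 1$.

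Finally, I would split on whether $K$ is bounded above. If $K$ admits a largest element $k$, then by consecutiveness $K = \{1, 2, \dotsc, k\}$, giving the first alternative of the conclusion. If instead $K$ is unbounded, then the consecutive run starting at $1$ with no maximum exhausts all of $\mathbb{Z}_+$, giving $K = \mathbb{Z}_+$, which is the second alternative. The only conceptual step of substance is the appeal to Proposition \ref{newlemma1}; beyond that, the argument is essentially a case-split on boundedness, so there is no real obstacle, only a need to state the two cases cleanly.
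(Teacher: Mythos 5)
Your proposal is correct and follows essentially the same route as the paper: both arguments hinge on Proposition \ref{newlemma1} to place $1$ in $K$ and then use the consecutiveness hypothesis to force $K$ to be an initial segment of $\mathbb{Z}_+$. The paper phrases the dichotomy as ``assume $K\neq\{1,\dotsc,k\}$ for every $k$ and induct to get $K=\mathbb{Z}_+$,'' whereas you split on boundedness, but this is only a cosmetic reorganization of the same proof.
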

\begin{proof}
Suppose for each $k \in \mathbb{Z}_+$, $K \neq \{1,2,\dotsc,k\}$. By Proposition \ref{newlemma1}, $1 \in K$. But $K\neq\{1\}$. Since $K$ only has consecutive integers, $2 \in K$. Similarly, if $k \in K$, then $k+1 \in K$. Thus, $K = \mathbb{Z}_+$. 
\end{proof}
Lastly, we show that if there is a smallest integer $m$ not in $K$, then $K$ contains all positive integers less than $m$.
\begin{prop}\label{newlemma3}
Let $H^{\infty}_K(\mathbb{D})$ be an algebra with $K\neq\mathbb{Z}_+$. Let $m$ be the smallest positive integer not in $K$. Then $K$ contains every positive integer $j < m$.
\end{prop}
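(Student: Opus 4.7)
The assertion is essentially a direct consequence of the definition of $m$, so my plan is to give a short proof by contradiction (equivalently, by appealing to the well-ordering of $\mathbb{Z}_+$). First, I would invoke Proposition \ref{newlemma1} to note that $1 \in K$, which forces $m \geq 2$; this ensures that the set of positive integers strictly less than $m$ is nonempty in the interesting case, and otherwise the statement is vacuous.

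Next, suppose for contradiction that there exists a positive integer $j$ with $1 \leq j < m$ and $j \notin K$. Then $j$ itself belongs to $\mathbb{Z}_+ \setminus K$, and since $j < m$, this contradicts the choice of $m$ as the \emph{smallest} element of $\mathbb{Z}_+ \setminus K$. Hence every positive integer $j < m$ must lie in $K$.

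I do not expect any genuine obstacle here, since the result is really just an unwinding of the minimality property of $m$; the purpose of recording it as a separate proposition appears to be to package this fact conveniently for later use (in particular, together with Lemma \ref{Kinfinite}, to describe the shape of the first ``block'' of $K$). If the authors prefer an argument that does not use contradiction, one could equivalently observe that the set $S = \{1,2,\dotsc,m-1\}$ consists entirely of positive integers strictly less than the minimum of $\mathbb{Z}_+\setminus K$, so $S \subseteq K$ by definition of the minimum.
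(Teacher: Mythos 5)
Your proof is correct and follows essentially the same route as the paper's: both arguments simply unwind the minimality of $m$ as the least element of $\mathbb{Z}_+\setminus K$ (the paper phrases it via the partition of $\mathbb{Z}_+$ into $K$ and its complement, while you phrase it as a contradiction, but these are the same observation). Your additional remark that Proposition \ref{newlemma1} gives $m\geq 2$ is a harmless refinement the paper does not bother to make.
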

\begin{proof}
By Proposition \ref{newlemma1}, $K\neq\emptyset$. Since $K$ and $\mathbb{Z}_+\setminus K$ partition $\mathbb{Z}_+$ and $m = \min \big(\mathbb{Z}_+\setminus K\big)$, every positive integer $j < m$ must be in $K$. 
\end{proof}
We show that a smallest positive integer $m \notin K$ can be found, so that a function $f$ can be written as a product of functions involving $m$.

\begin{Lemma}\label{jay4}
Let $K \subset \mathbb{Z}_+$ be a proper nonempty set such that $H^{\infty}_K(\mathbb{D})$ is an algebra. Let $f \in H^{\infty}_K(\mathbb{D})$ with $f(0)=0$. Then there exists a smallest positive integer $m \notin K$  and an analytic function $h$ on $\D$ such that
$$f(z) = z^m h(z).$$
\end{Lemma}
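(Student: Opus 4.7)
The plan is to use the power series representation of $f$ together with Proposition \ref{newlemma3}. Since $K$ is a proper subset of $\mathbb{Z}_+$, the complement $\mathbb{Z}_+\setminus K$ is nonempty, so by well-ordering there is a smallest positive integer $m\notin K$. This produces the required $m$.

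Next, I would expand $f$ as a Taylor series about $0$, writing $f(z)=\sum_{n=0}^{\infty}a_n z^n$ where $a_n=f^{(n)}(0)/n!$. The hypothesis $f(0)=0$ gives $a_0=0$. Proposition \ref{newlemma3} tells us that every positive integer $j<m$ lies in $K$, so $f^{(j)}(0)=0$, and hence $a_j=0$ for all $1\le j\le m-1$. Therefore the first potentially nonzero coefficient is $a_m$, and one can factor
$$f(z)=\sum_{n=m}^{\infty}a_n z^n = z^m \sum_{n=m}^{\infty} a_n z^{n-m} = z^m h(z),$$
where $h(z):=\sum_{n=0}^{\infty}a_{n+m}z^n$.

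Finally, I would verify that $h$ is analytic on $\mathbb{D}$. The power series for $h$ has the same radius of convergence as that of $f$, which is at least $1$ since $f\in H^{\infty}(\mathbb{D})$. Thus $h$ is analytic on $\mathbb{D}$ and the factorization $f(z)=z^m h(z)$ holds there. There is no real obstacle here; the only subtlety is to invoke Proposition \ref{newlemma3} to ensure that \emph{all} intermediate Taylor coefficients vanish, which is precisely what lets us pull out the factor $z^m$ cleanly.
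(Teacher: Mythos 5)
Your proof is correct and follows essentially the same route as the paper: both obtain $m$ as the least element of the nonempty set $\mathbb{Z}_+\setminus K$ and use Proposition \ref{newlemma3} to force all Taylor coefficients of order below $m$ to vanish before factoring out $z^m$. The paper additionally splits into cases according to whether $K$ consists only of consecutive integers (invoking Proposition \ref{newlemma2} in that case), but your streamlined argument shows that this case distinction is unnecessary.
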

\begin{proof}
It is enough to assume that  $f$ is not identically zero and nonconstant on $\mathbb{D}$. If $K$ only contains consecutive integers, then because $K$ is a proper subset of $\mathbb{Z}_+$, Proposition \ref{newlemma2} tells us that $K = \{1,2,3,4,\dotsc,k\}$, for some $k \in \mathbb{Z}_+$. Then the smallest positive integer not in $K$ is $k + 1$. Now, if $f \in H^{\infty}_K(\mathbb{D})$ and  $f(0) = 0$, it follows that $f$ has the following representation:
\begin{flalign*}
f(z) &= \alpha_{k + 1}z^{k + 1} + \sum_{i=k + 2}^{\infty} \alpha_iz^i\\
&= z^{k+1}h(z),
\end{flalign*}
where $h(z) = \alpha_{k + 1} + \sum_{k + 2}^{\infty}\alpha_i z^i$ is analytic, as desired.

If $K$ contains a pair of nonconsecutive integers, let $k_i$ and $k_{i+1}$ denote the smallest such pair. Then $m = k_i + 1$ is the smallest positive integer not in $K$. By Proposition \ref{newlemma3} every positive integer $j < k_i+1$ is in $K$, so that $f^{(j)}(0) = 0$. Because $f(0) = 0$, we know that $f$ has the following representation for all positive integers $\ell \notin K \cup \{m\}$:
\begin{flalign*}
f(z) &= \alpha_m z^m + \sum_{\ell} c_{\ell} z^{\ell} \\[3pt]
     &= z^m h(z),
\end{flalign*}
where $h(z) = \alpha_m + \sum\nolimits_{\ell} \, c_{\ell}\, z^{\ell-m}$ is an analytic function, as desired.
\end{proof}
Let $f$ be an analytic function on $\mathbb{D}$ such that $||f||_{\infty}\leq 1$. If there is a point $z_0 \in \mathbb{D}$ such that $f(z_0) \in \mathbb{D}$, then the image $f(\mathbb{D})$ is completely contained in $\mathbb{D}$. To prove this claim, recall that the Open Mapping Theorem says that the image of an open set under a nonconstant analytic mapping is open.
\begin{Lemma}\label{jay5}
Let $f \in H^{\infty}(\mathbb{D})$ such that $||f||_{\infty} \leq 1$. If there is a point $z_0 \in \mathbb{D}$ such that $f(z_0) \in \mathbb{D}$, then $|f(z)| < 1$ for all $z \in \mathbb{D}$. 
\end{Lemma}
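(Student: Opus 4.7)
The plan is to use the Open Mapping Theorem, as suggested by the remark preceding the lemma, to rule out the possibility that $|f|$ attains the value $1$ anywhere in $\mathbb{D}$. First I would dispose of the trivial case where $f$ is constant: then $f(z)=f(z_0)$ for all $z\in\mathbb{D}$, and since $f(z_0)\in\mathbb{D}$ (i.e.\ $|f(z_0)|<1$), we immediately get $|f(z)|<1$ for every $z\in\mathbb{D}$.

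For the nonconstant case, I would argue by contradiction: assume there exists $z_1\in\mathbb{D}$ with $|f(z_1)|=1$. The hypothesis $\|f\|_\infty\le 1$ tells us $f(\mathbb{D})\subseteq\overline{\mathbb{D}}$, so the point $w_1:=f(z_1)$ lies on the unit circle $\partial\mathbb{D}$. Since $f$ is nonconstant and analytic on the connected open set $\mathbb{D}$, the Open Mapping Theorem guarantees that $f(\mathbb{D})$ is an open subset of $\mathbb{C}$. Therefore there is an open disk $V$ centered at $w_1$ with $V\subseteq f(\mathbb{D})\subseteq\overline{\mathbb{D}}$. However, every open neighborhood of a point on $\partial\mathbb{D}$ must contain points of modulus strictly greater than $1$, contradicting $V\subseteq\overline{\mathbb{D}}$.

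This contradiction shows no such $z_1$ exists, so $|f(z)|<1$ for every $z\in\mathbb{D}$, and combined with the constant case the conclusion holds in general. There is no substantive obstacle here; the only delicate point is remembering to separate the constant case from the nonconstant one, since the Open Mapping Theorem requires nonconstancy. An equivalent route would be the Maximum Modulus Principle applied directly: if $|f|$ attained the value $1$ at an interior point $z_1$, then $|f|$ would achieve its supremum over $\mathbb{D}$ at $z_1\in\mathbb{D}$, forcing $f$ to be constant with $|f|\equiv 1$, contradicting $|f(z_0)|<1$. Either formulation yields the result in a couple of lines.
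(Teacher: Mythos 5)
Your proof is correct and follows essentially the same route as the paper: dispose of the constant case directly, then use the Open Mapping Theorem together with $\|f\|_{\infty}\le 1$ to conclude that $f(\mathbb{D})$ is an open set contained in $\overline{\mathbb{D}}$, hence in $\mathbb{D}$. You merely spell out (via contradiction at a boundary point) the step the paper leaves implicit, and your Maximum Modulus alternative is a valid equivalent.
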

\begin{proof}
If $f$ is constant, then $f(z) = f(z_0)$ for all $z \in \mathbb{D}$, and the result follows. Otherwise, the Open Mapping Theorem and $||f||_{\infty}\leq 1$ imply that the image $f(\mathbb{D})$ is an open set contained in $\mathbb{D}$. Thus, $|f(z)| < 1$ for all $z \in \mathbb{D}$. 
\end{proof}

\begin{section}{Interpolation Theorems On The Subalgebra $H^{\infty}_K(\mathbb{D})$}\label{sect4}
We now present our interpolation theorems. The following theorem is a slight generalization of the interpolation theorem established in \cite{david}.

\begin{thm}\label{newtheorem1}
 Let $K = \{1,2,\dotsc,k\} \subset \mathbb{Z}_+$. Let $z_1,z_2,\dotsc,z_n$ be distinct points in $\mathbb{D}$ and $w_1,w_2,\dotsc,w_n$ be points in $\mathbb{D}$. There exists an analytic function $f \in H^{\infty}_K(\mathbb{D})$ with $f(z_i)=w_i$ for $i = 1, 2, \dotsc,n$ and $||f||_{\infty} \leq 1$ if and only if there exists a $\lambda \in \mathbb{D}$ such that
$$\left[\frac{z_i^{k+1} \, \, \overline{z^{k+1}_j} - \varphi_{\lambda}(w_i) \, \, \overline{\varphi_{\lambda}(w_j)}}{1-z_i\overline{z_j}}\right]\; \text{ is \; positive \; semidefinite.}$$
\end{thm}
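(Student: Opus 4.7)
The plan is to reduce the problem, via composition with a M\"obius transformation, to the classical Nevanlinna--Pick theorem applied to a bounded analytic function with a zero of order $k+1$ at the origin. Since $K = \{1,\dots,k\}$, the smallest positive integer outside $K$ is $k+1$, so Lemma \ref{jay4} asserts that every $g \in H^{\infty}_K(\D)$ with $g(0) = 0$ factors as $g(z) = z^{k+1}h(z)$ for some analytic $h$ on $\D$. The free parameter $\lambda \in \D$ appearing in the matrix condition will correspond to $f(0)$ in the interpolant constructed.

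For the necessity direction, I would assume $f \in H^{\infty}_K(\D)$ interpolates the data with $\|f\|_{\infty} \le 1$. Because each $w_i \in \D$, Lemma \ref{jay5} gives $|f(z)|<1$ throughout $\D$, so in particular $\lambda := f(0)\in \D$ and $f(\D)\subset \D$. Lemma \ref{jay2} then yields $g := \varphi_\lambda \circ f \in H^{\infty}_K(\D)$ with $g(0) = \varphi_\lambda(\lambda) = 0$. Applying Lemma \ref{jay4} factors $g(z) = z^{k+1}h(z)$ with $h$ analytic, and a standard maximum-modulus estimate on $g/z^{k+1}$ over circles $|z|=r$ with $r\to 1$ bounds $\|h\|_{\infty}\le 1$. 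For every index with $z_i\ne 0$ this yields $h(z_i) = \varphi_\lambda(w_i)/z_i^{k+1}$, and the classical Nevanlinna--Pick theorem applied to $h$ produces a positive semidefinite matrix. Conjugating by the diagonal matrix $\operatorname{diag}(z_i^{k+1})$ (a congruence, preserving positive semidefiniteness) transforms this matrix into the claimed target matrix.

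For sufficiency, I would reverse these steps. Given $\lambda\in \D$ making the target matrix positive semidefinite, the inverse diagonal congruence gives positive semidefiniteness of the classical Pick matrix for the data $y_i := \varphi_\lambda(w_i)/z_i^{k+1}$, so Pick's theorem supplies $h \in H^{\infty}(\D)$ with $\|h\|_{\infty} \le 1$ and $h(z_i) = y_i$. Setting $B(z) := z^{k+1}h(z)$ yields $B \in H^{\infty}_K(\D)$ (since its Taylor series has no terms of degree $1,\dots,k$), $\|B\|_{\infty}\le 1$, and $B(z_i) = \varphi_\lambda(w_i)$. The candidate interpolant is then $f := \varphi_\lambda^{-1}\circ B$; Lemma \ref{jay2} gives $f \in H^{\infty}_K(\D)$ once one checks $B(\D)\subset\D$, and the identities $\varphi_\lambda^{-1}(\varphi_\lambda(w_i)) = w_i$ and $\|\varphi_\lambda^{-1}\circ B\|_{\infty}\le 1$ complete the verification.

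The main technical obstacle is the bookkeeping at points where $z_i = 0$, since then $\varphi_\lambda(w_i)/z_i^{k+1}$ is indeterminate and the diagonal congruence is degenerate. The resolution is the observation that $z_i = 0$ forces $w_i = \lambda = f(0)$ in either direction, which causes the corresponding row and column of the target matrix to vanish identically; the full matrix is then positive semidefinite if and only if the submatrix indexed by the remaining points is, and the congruence argument applies directly to that submatrix. A secondary verification needed is $B(\D)\subset \D$ rather than merely $\overline{\D}$, so that Lemma \ref{jay2} applies to $\varphi_\lambda^{-1}\circ B$; this follows from the Open Mapping Theorem when $B$ is nonconstant and is trivial when $B$ is constant (in which case $B\equiv 0$).
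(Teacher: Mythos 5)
Your proposal is correct and follows essentially the same route as the paper's own proof: set $\lambda = f(0)$, use Lemmas \ref{jay5}, \ref{jay2}, and the factorization $\varphi_\lambda\circ f = z^{k+1}h$ together with the classical Nevanlinna--Pick theorem and the diagonal congruence, and reverse the construction via $f = \varphi_{-\lambda}(z^{k+1}h)$ for sufficiency. Your treatment is in fact slightly more careful than the paper's on two minor points the paper leaves implicit (the bound $\|h\|_\infty\le 1$ via a maximum-modulus argument and the verification that $B(\D)\subset\D$ before invoking Lemma \ref{jay2}).
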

\begin{proof}
Suppose $z_i \neq 0$ for each $i = 1,2,\dotsc, n$. Assume there exists an $f \in H^{\infty}_K(\mathbb{D})$ such that $f(z_i) = w_i$ and $||f||_{\infty} \leq 1$. Set $\lambda = f(0)$, which is in $\mathbb{D}$ by Lemma \ref{jay5}. Since $\phi_{\lambda} \in H^{\infty}(\mathbb{D})$ and $f \in H^{\infty}_K(\mathbb{D})$ with $||f||_{\infty}\leq 1$ we know from the Lemma \ref{jay2} that the composition $\phi_{\lambda} \circ f \in H^{\infty}_K(\mathbb{D})$. Further, $\phi\circ f$ at zero yields:
$$\phi_{\lambda}(f(0)) = \phi_{\lambda}(\lambda) = 0,$$
so $\phi_{\lambda}(f(z)) = z^{k+1}h(z)$ where $h \in H^{\infty}(\mathbb{D})$ and $||h||_{\infty} \leq 1$. Applying the Nevanlinna-Pick Interpolation Theorem on $\{z_1,z_2,\dotsc,z_n\}$ and $h$, the matrix 
$$\left[\frac{1-h(z_i)\overline{h(z_j)}}{1-z_i\overline{z_j}}\right]\; \; \text{is \; positive \; semidefinite.}$$
Given that $\phi_{\lambda}(f(z_i)) = \phi_{\lambda}(w_i) = z^{k+1}_ih(z_i)$ the matrix
$$\left[\frac{z_i^{k+1} \, \, \overline{z^{k+1}_j} - \phi_{\lambda}(w_i) \, \, \overline{\phi_{\lambda}(w_j)}}{1-z_i\overline{z_j}}\right]$$
equals
$$
\begin{bmatrix}
    z_1^{k+1} & 0 & ... & 0 \\
    0 & z_2^{k+1} & ... & 0\\
   \vdots & \vdots & \ddots & \vdots\\
   0 & 0 & \hdots & z_n^{k+1}
  \end{bmatrix}
\left [  \frac{1- h (z_i) \overline{ h (z_j)}  } { 1 - z_i \overline{z_j} } \right]  
\begin{bmatrix}
    \,\,\overline{z_1^{k+1}} & 0 & ... & 0 \\
    0 &  \overline{z_2^{k+1}} & ... & 0\\
   \vdots & \vdots & \ddots & \vdots\\
   0 & 0 & \hdots &  \overline{z_n^{k+1}}
  \end{bmatrix},
$$
and thus
$$\left[\frac{z^{k+1}_i \overline{z^{k+1}_j} - \varphi_{\lambda}(w_i) \, \overline{\varphi_{\lambda}(w_j)}}{1-z_i \overline{z_j}}\right] \; \;  \text{ is \; positive \; semidefinite.}$$

Now suppose $z_1 = 0$. Then $\lambda = f(0) = f(z_1) = w_1$, so the first row and column of the above matrix are zero; and the same reasoning applies to the remaining entries of the matrix.

Conversely, it suffices to assume that $z_i\neq 0$ for all $i = 1,2,\dotsc,n$. Suppose there exists a $\lambda \in \mathbb{D}$ such that 
$$\left[\frac{z^{k+1}_i \overline{z^{k+1}_j} - \varphi_{\lambda}(w_i) \, \overline{\varphi_{\lambda}(w_j)}}{1-z_i \overline{z_j}}\right] \; \; \text{is \; positive \; semidefinite.}$$
Since the above matrix equals
$$
\begin{bmatrix}
    z_1^{k+1} & 0 & ... & 0 \\
    0 & z_2^{k+1} & ... & 0\\
   \vdots & \vdots & \ddots & \vdots\\
   0 & 0 & \hdots & z_n^{k+1}
  \end{bmatrix}
\left [  \frac{1- z^{-(k+1)}_i \, \phi_{\lambda}(w_i) \, \overline{ z^{-(k+1)}_j \, \phi_{\lambda}(w_j)}  } { 1 - z_i \overline{z_j} } \right]  
\begin{bmatrix}
    \,\,\overline{z_1^{k+1}} & 0 & ... & 0 \\
    0 &  \overline{z_2^{k+1}} & ... & 0\\
   \vdots & \vdots & \ddots & \vdots\\
   0 & 0 & \hdots &  \overline{z_n^{k+1}}
  \end{bmatrix}
$$
we deduce that
$$\left [  \frac{1- z^{-(k+1)}_i \, \phi_{\lambda}(w_i) \, \overline{ z^{-(k+1)}_j \, \phi_{\lambda}(w_j)}  } { 1 - z_i \overline{z_j} } \right] \; \; \text{is \; positive \; semidefinite.}$$
By the Nevanlinna-Pick Interpolation Theorem, there is an $h \in H^{\infty} (\mathbb{D})$ such that $h(z_i) = z^{-(k+1)}_i\,\phi_{\lambda}(w_i)$, for each $i = 1,2,\dotsc,n$ and $||h||_{\infty}\leq 1$.

Now define $f(z) = \phi_{-\lambda}(z^{k+1}h(z))$. Clearly, $f \in H^{\infty}(\mathbb{D})$, $||f||_{\infty}\leq 1$, and for each $i = 1,2, \dotsc, n$, we have
$$f(z_i) = \phi_{-\lambda}(z^{k+1}_i \,  h(z_i)) = \phi_{-\lambda}(z^{k+1}_i \, z^{-(k+1)}_i\,\phi_{\lambda}(w_i)) = w_i.$$ 

To show that $f \in H^{\infty}_K(\mathbb{D})$, we first demonstrate that $z^{k+1}h(z) \in H^{\infty}_K(\mathbb{D})$. Clearly, $z^{k+1}h(z) \in H^{\infty}(\mathbb{D})$. Let $\ell \in K$ and consider the $\ell^{\text{th}}$ derivative of $z^{k+1}h(z)$:
$$\Big(z^{k+1}h(z)\Big)^{(\,\ell\,)} = \sum_{t=0}^{\ell}{\ell \choose t} \Big(z^{k+1}\Big)^{(\,t\,)} \Big(h(z)\Big)^{(\,\ell - t\,)}.$$
For each $t = 0, 1, \dotsc, k$, we have
$$\Big(z^{k+1}\Big)^{(\,t\,)} = \frac{(k+1)!}{(k-1+t)!} \cdot z^{k+1-t}.$$
Because $k + 1 - t > 0$, it follows that $\Big(z^{k+1}h(z)\Big)^{(\, \ell \, )} = 0$ at $z = 0$, for every $\ell \in K$. Thus, $z^{k+1}h(z) \in H^{\infty}_K(\mathbb{D})$. Finally, since $\phi_{-\lambda} \in H^{\infty}(\mathbb{D})$, we deduce from Lemma \ref{jay2} that $f(z) = \phi_{-\lambda}(z^{k+1}h(z)) \in H^{\infty}_K(\mathbb{D}),$ so $f$ is our desired interpolation function.
\end{proof}
\begin{remark}
If $K = \{1\}$, then $k+1 = 2$ gives us the interpolation result for $H^{\infty}_{\{1\}}(\mathbb{D})$. 
\end{remark}
 Given a nonempty set $K \subset \mathbb{Z}_+$ the following theorem and corollary give a sufficient condition for an interpolation function to exist in the algebra $H^{\infty}_K(\mathbb{D})$. Although the next theorem applies to sets $K = \{1,2,\dotsc,k\}$, it also applies to those sets $K$ with at least one pair of nonconsecutive integers for which $H^{\infty}_K(\mathbb{D})$ is an algebra, such as $H^{\infty}_{\{1,3\}}(\mathbb{D})$.
\begin{thm}\label{newtheorem2}
Let $K = \{1,k_1,k_2,\dotsc,k_p\}$ where $1 < k_1 < k_2 < \cdots < k_p$ such that $H^{\infty}_K(\mathbb{D})$ is an algebra. Let $z_1,z_2,\dotsc,z_n$ be distinct points in $\mathbb{D}$ and let $w_1, w_2, \dotsc, w_n$ be points in $\mathbb{D}$. If there exists a $\lambda \in \mathbb{D}$ such that the matrix
$$\left[\frac{z_i^{k_p+1} \, \, \overline{z^{k_p+1}_j} - \varphi_{\lambda}(w_i) \, \, \overline{\varphi_{\lambda}(w_j)}}{1-z_i\overline{z_j}}\right]\; \text{ is \; positive \; semidefinite,}$$
then there exists a function $f \in H^{\infty}_K(\mathbb{D})$ such that $f(z_i) = w_i$ and $||f||_{\infty} \leq 1$. 
\end{thm}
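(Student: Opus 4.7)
The plan is to mimic the sufficient (``if'') direction of Theorem \ref{newtheorem1}, with the exponent $k+1$ there replaced throughout by $k_p+1$, where $k_p$ is the largest element of $K$. As in that proof, I would first reduce to the generic case $z_i \neq 0$ for all $i$: a node $z_i=0$ forces $\lambda = f(0) = w_i$ and zeroes the corresponding row and column of the hypothesized matrix, so the general case follows from the one where every $z_i$ is nonzero.

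Assuming all $z_i$ are nonzero and the matrix is positive semidefinite, I would factor it as $D\,M\,D^{*}$, where $D = \operatorname{diag}(z_1^{k_p+1},\dotsc,z_n^{k_p+1})$ and
$$M = \left[\frac{1 - z_i^{-(k_p+1)}\varphi_{\lambda}(w_i)\,\overline{z_j^{-(k_p+1)}\varphi_{\lambda}(w_j)}}{1-z_i\overline{z_j}}\right]_{i,j}.$$
Since $D$ is invertible, $M$ itself must be positive semidefinite, and the classical Nevanlinna-Pick Interpolation Theorem then produces an $h\in H^{\infty}(\mathbb{D})$ with $||h||_{\infty}\leq 1$ satisfying $h(z_i)=z_i^{-(k_p+1)}\varphi_{\lambda}(w_i)$ for every $i$. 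Setting $f(z) := \varphi_{-\lambda}\bigl(z^{k_p+1}h(z)\bigr)$, a direct computation gives $f(z_i)=w_i$, and $||f||_{\infty}\leq 1$ since the argument of $\varphi_{-\lambda}$ maps $\mathbb{D}$ into $\overline{\mathbb{D}}$.

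It remains to verify that $f\in H^{\infty}_K(\mathbb{D})$. By Lemma \ref{jay2} it suffices to show that $g(z):=z^{k_p+1}h(z)$ lies in $H^{\infty}_K(\mathbb{D})$. For each $\ell\in K$ we have $\ell \leq k_p$, and the Leibniz rule yields
$$g^{(\ell)}(z) = \sum_{t=0}^{\ell}\binom{\ell}{t}\bigl(z^{k_p+1}\bigr)^{(t)} h^{(\ell-t)}(z),$$
in which each $\bigl(z^{k_p+1}\bigr)^{(t)}$ is a constant multiple of $z^{k_p+1-t}$ with exponent $k_p+1-t\geq 1$. Thus every term vanishes at $z=0$, giving $g^{(\ell)}(0)=0$ and hence $g\in H^{\infty}_K(\mathbb{D})$. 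A final application of Lemma \ref{jay2} to $\varphi_{-\lambda}\in H^{\infty}(\mathbb{D})$ composed with $g$ then delivers $f\in H^{\infty}_K(\mathbb{D})$.

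I do not expect a substantial technical obstacle: the whole argument is a direct translation of the sufficiency half of Theorem \ref{newtheorem1}. The only conceptual point is recognizing why $k_p+1$ is the ``right'' exponent---it is the smallest power of $z$ whose first $k_p$ derivatives all vanish at $0$, so multiplication by $z^{k_p+1}$ handles every constraint indexed by $K\subseteq\{1,\dotsc,k_p\}$ at once. This also illuminates why only sufficiency is claimed here: when $K$ has gaps, $z^{k_p+1}$ is blunter than necessary, so the hypothesized positivity condition is likely stronger than what is actually required for an interpolant to exist.
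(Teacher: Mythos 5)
Your proposal is correct and follows essentially the same route as the paper: the paper's proof of this theorem simply factors out the diagonal of $z_i^{k_p+1}$'s, applies the classical Nevanlinna--Pick theorem to obtain $h$, sets $f(z)=\varphi_{-\lambda}(z^{k_p+1}h(z))$, and defers the verification that $f\in H^{\infty}_K(\mathbb{D})$ to the Leibniz-rule argument already given in Theorem \ref{newtheorem1}. Your write-up fills in those deferred details (and the reduction when some $z_i=0$) explicitly, but there is no substantive difference in method.
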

\begin{proof}
It is enough to assume that $z_i \neq 0$ for all $i = 1,2,\dotsc, n$. Given that the above matrix is positive semidefinite it follows that the matrix 
$$\left [  \frac{1- z^{-(k_p+1)}_i \, \phi_{\lambda}(w_i) \, \overline{ z^{-(k_p+1)}_j \, \phi_{\lambda}(w_j)}  } { 1 - z_i \overline{z_j} } \right]$$
is also positive semidefinite. By the Nevanlinna-Pick Interpolation Theorem, there exists a function $h \in H^{\infty}(\mathbb{D})$ such that $h(z_i) = z^{-(k_p + 1)}_i\phi_{\lambda}(w_i)$. Define the function $f(z) = \phi_{-\lambda}(z^{k_p + 1}h(z))$. A similar argument as in the proof of Theorem \ref{newtheorem1} shows that $f \in H^{\infty}_K(\mathbb{D}), \, f(z_i) = w_i$ and $||f||_{\infty}\leq 1$. 

\end{proof}
Theorem \ref{newtheorem2} and Lemma \ref{Kinfinite} allow us to deduce the following corollary.
\begin{cor}\label{newcorollary1}
Let $K$ be an infinite proper subset of $\mathbb{Z}_+$ such that $H^{\infty}_K(\mathbb{D})$ is an algebra and
\begin{equation*}\label{ugly} \mathbb{Z}_+ \setminus K =
\lbrace n_1d, n_2d, \dots , n_pd, N_0d, (N_0 + j)d : j \in \mathbb{Z}_+ \rbrace
\end{equation*}
for some positive integer $d$, some finite set $\{n_i\}_{i=1}^p \subset \mathbb{Z}_+$ where \\ $n_1 < n_2 < \cdots < n_p$ and a positive integer $N_0 > n_p$. Moreover, let $K_1 \subset \mathbb{Z}_+$ such that
\begin{equation*}\label{doubleugh}
\mathbb{Z}_+\setminus K_1 = \{n_1,n_2,\dotsc,n_p, N_0, N_0 + j \, : \, j \in \mathbb{Z}_+\}.
\end{equation*}
Let $z_1, z_2, \dotsc,z_n$ be distinct points in $\mathbb{D}$ and $w_1, w_2, \dotsc, w_n$ be points in $\mathbb{D}$. If there exists a $\lambda \in \mathbb{D}$ such that the matrix 
$$\left[\frac{z^{(n_1+1)d}_i \,\, \overline{z_j^{(n_1+1)d}} - \varphi_{\lambda}(w_i) \overline{\varphi_{\lambda}(w_j)}}{1-z_i^{ d}\,\overline{z_j^{ d}}}\right] \; \text{ is \; positive \; semidefinite}$$
where $n_1+1$ is the smallest positive integer not in $K_1$,
then there exists an $f \in H^{\infty}_K(\mathbb{D})$ with $f(z_i) = w_i$ for all $i = 1,2,\dotsc,n$ and $||f||_{\infty} \leq 1$. 
\end{cor}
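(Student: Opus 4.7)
The natural plan is to use the substitution $u=z^{d}$ to reduce the interpolation problem on the infinite-complement set $K$ to an interpolation problem on the finite set $K_1$, where Theorem \ref{newtheorem2} applies directly. The key structural fact is that every element of $\mathbb{Z}_{+}\setminus K$ is a multiple of $d$, so every $f\in H^{\infty}_{K}(\mathbb{D})$ has the form $f(z)=F(z^{d})$ for some analytic $F$ on $\mathbb{D}$.

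\textbf{Step 1: the correspondence $f(z)=F(z^{d})$.} Since the power-series expansion of any $f\in H^{\infty}_{K}(\mathbb{D})$ only has nonzero coefficients at indices in $\{0\}\cup(\mathbb{Z}_{+}\setminus K)$, and this latter set consists only of multiples of $d$, one may write $f(z)=F(z^{d})$. Matching coefficients shows that $F^{(k)}(0)=0$ exactly when $k\in K_1$, so $F\in H^{\infty}_{K_1}(\mathbb{D})$ with $\|F\|_{\infty}=\|f\|_{\infty}$. The converse is immediate. Because $H^{\infty}_{K}(\mathbb{D})$ is an algebra and this correspondence is multiplicative, $H^{\infty}_{K_1}(\mathbb{D})$ is also an algebra; moreover $K_1$ is finite, contained in $\{1,2,\dots,N_{0}-1\}$.

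\textbf{Step 2: translate the matrix condition.} Set $u_{i}:=z_{i}^{d}\in\mathbb{D}$. The hypothesis matrix factors as
\[
\left[\frac{z_{i}^{(n_{1}+1)d}\,\overline{z_{j}^{(n_{1}+1)d}}-\varphi_{\lambda}(w_{i})\overline{\varphi_{\lambda}(w_{j})}}{1-z_{i}^{d}\,\overline{z_{j}^{d}}}\right]=\left[\frac{u_{i}^{n_{1}+1}\,\overline{u_{j}^{n_{1}+1}}-\varphi_{\lambda}(w_{i})\overline{\varphi_{\lambda}(w_{j})}}{1-u_{i}\,\overline{u_{j}}}\right],
\]
which is precisely the matrix appearing in Theorem \ref{newtheorem2} for the algebra $H^{\infty}_{K_{1}}(\mathbb{D})$ with exponent $n_{1}+1$.

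\textbf{Step 3: apply Theorem \ref{newtheorem2} and lift.} Invoking Theorem \ref{newtheorem2} for $K_1$ at the points $u_{i}$ with targets $w_{i}$ produces $F\in H^{\infty}_{K_{1}}(\mathbb{D})$ with $F(u_{i})=w_{i}$ and $\|F\|_{\infty}\leq 1$. Define $f(z):=F(z^{d})$. Then $f\in H^{\infty}(\mathbb{D})$, $\|f\|_{\infty}=\|F\|_{\infty}\leq 1$, and $f(z_{i})=F(u_{i})=w_{i}$. The correspondence of Step 1 gives $f\in H^{\infty}_{K}(\mathbb{D})$, completing the argument.

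\textbf{Main obstacle.} The most delicate point is ensuring that Theorem \ref{newtheorem2} can be applied to the data $\{(u_{i},w_{i})\}_{i=1}^{n}$: even though the $z_{i}$ are distinct, the values $u_{i}=z_{i}^{d}$ need not be distinct when $d>1$. One has to verify that whenever $u_{i}=u_{j}$ the PSD condition on the hypothesis matrix forces $w_{i}=w_{j}$, so that repeated rows may be collapsed without loss; the degenerate case $z_{i}=0$ (hence $u_{i}=0$) must also be absorbed exactly as in the proof of Theorem \ref{newtheorem1}, using $\lambda=f(0)=w_{i}$ to make the corresponding row and column vanish. Once those reductions are in place, the three-step scheme above proceeds mechanically.
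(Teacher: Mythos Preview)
Your proposal follows essentially the same route as the paper's proof: substitute $V_i=z_i^{d}$, recognize the resulting matrix as the hypothesis of Theorem~\ref{newtheorem2} for the finite set $K_1$, obtain $F\in H^{\infty}_{K_1}(\mathbb{D})$ with $F(V_i)=w_i$ and $\|F\|_\infty\le 1$, and then set $f(z)=F(z^{d})$ and read off that $f\in H^{\infty}_{K}(\mathbb{D})$ from the power-series form. You are in fact more careful than the paper on one technical point: you flag that for $d>1$ the points $u_i=z_i^{d}$ need not be distinct and sketch why the PSD hypothesis forces $w_i=w_j$ whenever $u_i=u_j\neq 0$ (so repeated nodes can be collapsed), whereas the paper simply applies Theorem~\ref{newtheorem2} to the $V_i$ without comment.
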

\begin{proof}
Suppose there is a $\lambda \in \mathbb{D}$ such that the above matrix is positive semidefinite. Set $V_i = z^d_i$ and $\overline{V_j}=\overline{z^d_j}$, so that the matrix
$$\left[\frac{V^{n_1+1}_i \, \overline{V^{n_1+1}_j} - \varphi_{\lambda}(w_i) \overline{\varphi_{\lambda}(w_j)}}{1-V_i \, \overline{V_j}}\right] \; \; \text{is \; positive \; semidefinite.}$$
By Theorem \ref{newtheorem2}, there exists an $F \in H^{\infty}_{K_1}(\mathbb{D})$ such that $F(V_i) = w_i$ and $||F||_{\infty} \leq 1$. Further, we have
$$F(V) = \alpha_0 + \alpha_1 \, V^{n_1} + \alpha_2 \, V^{n_2} + \cdots + \alpha_p\,V^{n_p} + \alpha_{p+1}\,V^{N_0} + \sum_{j=1}^{\infty} \alpha_{p+j+1}\, V^{N_0+j}.$$
Defining $f(z) = F(z^d)$, we see that 
$$f(z) = \alpha_0 + \alpha_{1}\,z^{n_1d} + \alpha_{2}\,z^{n_2d} + 
\dots + \alpha_{p}\,z^{n_pd}  + \alpha_{p+1}\,z^{N_0d} + \sum_{j=1}^{\infty}\alpha_{p+j+1}\,z^{(N_0+j)d}.$$
Clearly, $f \in H^{\infty}_K(\mathbb{D})$, $||f||_{\infty} \leq 1$ and $f(z_i) = F(z^d_i) = w_i$, for all $i = 1,2,\dotsc,n$. We conclude that $f$ is our desired interpolation function.
\end{proof}

Unfortunately, it is not clear if the converse of the above theorem is true, nor is it apparent that a positive semidefinite matrix exists such that Theorem \ref{newtheorem2} becomes a biconditional statement. However, we provide a necessary condition for an interpolation function to exist in the following theorem.
\begin{thm}\label{newtheorem3}
 Let $K = \{1, k_1, k_2,\dotsc, k_p\}$ with $1 < k_1 < k_2 < \dotsc < k_p$ denote a set of positive integers such that $H^{\infty}_K(\mathbb{D})$ is an algebra. Let $z_1,z_2,\dotsc,z_n$ be distinct points in $\mathbb{D}$ and $w_1,w_2,\dotsc,w_n$ be points in $\mathbb{D}$. 

\noindent If there is an $f \in H^{\infty}_K(\mathbb{D})$ such that $f(z_i) = w_i$ for $i = 1,2,\dotsc,n$ and $||f||_{\infty}\leq 1$, then there exists a $\lambda \in \mathbb{D}$ such that the matrix is
$$\left[\frac{z_i^{m} \, \, \overline{z^{m}_j} - \varphi_{\lambda}(w_i) \, \, \overline{\varphi_{\lambda}(w_j)}}{1-z_i\overline{z_j}}\right]\; \text{ is \; positive \; semidefinite},$$
where $m$ is the smallest positive integer not in $K$.
\end{thm}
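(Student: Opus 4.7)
The plan is to mimic the forward direction of Theorem~\ref{newtheorem1}, replacing the exponent $k+1$ (appropriate when $K=\{1,2,\dots,k\}$) with the smallest positive integer $m\notin K$ furnished by Lemma~\ref{jay4}. As there, the case in which some $z_i=0$ is handled separately: there $\lambda:=f(0)=w_i$, so $\varphi_{\lambda}(w_i)=0$ and the corresponding row and column of the candidate Pick matrix are automatically zero, reducing positive semidefiniteness to the smaller block with all nodes nonzero. So I may assume $z_i\neq 0$ for every $i$.

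Given such an $f\in H^{\infty}_K(\D)$ with $f(z_i)=w_i$ and $\|f\|_{\infty}\le 1$, Lemma~\ref{jay5} applied at any $z_i$ (where $|f(z_i)|=|w_i|<1$) forces $|f(z)|<1$ throughout $\D$, so $\lambda:=f(0)\in\D$. Set $g:=\varphi_{\lambda}\circ f$. Since $\varphi_{\lambda}\in H^{\infty}(\D)$ and $f(\D)\subset\D$, Lemma~\ref{jay2} gives $g\in H^{\infty}_K(\D)$, and plainly $g(0)=\varphi_{\lambda}(\lambda)=0$ and $\|g\|_{\infty}\le 1$. Lemma~\ref{jay4} then produces the smallest $m\notin K$ together with an analytic function $h$ on $\D$ with $g(z)=z^m h(z)$. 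The essential extra input here is Proposition~\ref{newlemma3}, which guarantees $\{1,2,\dots,m-1\}\subset K$; together with $g(0)=0$ this says $g$ vanishes to order at least $m$ at the origin, and the Schwarz lemma then upgrades $\|g\|_{\infty}\le 1$ to $\|h\|_{\infty}\le 1$.

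Evaluating at the interpolation nodes yields $h(z_i)=z_i^{-m}\varphi_{\lambda}(w_i)$. The classical Nevanlinna-Pick theorem applied to $h$ shows that the matrix $\bigl[(1-h(z_i)\overline{h(z_j)})/(1-z_i\overline{z_j})\bigr]$ is positive semidefinite; conjugating by the diagonal matrix $\mathrm{diag}(z_1^m,\dots,z_n^m)$ converts this directly into the matrix in the statement. The only step that genuinely goes beyond Theorem~\ref{newtheorem1} is locating the correct exponent $m$ and verifying that $g$ really vanishes to order $m$ at $0$ when $K$ is non-consecutive; this is exactly what Lemma~\ref{jay4} and Proposition~\ref{newlemma3} have been arranged to supply, so once those ingredients are cited the argument should proceed essentially verbatim from the proof of Theorem~\ref{newtheorem1}.
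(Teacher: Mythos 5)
Your proof is correct and follows essentially the same route as the paper's: set $\lambda=f(0)$, form $g=\varphi_{\lambda}\circ f$ (in $H^{\infty}_K(\mathbb{D})$ by Lemma \ref{jay2}), factor $g(z)=z^m h(z)$ via Lemma \ref{jay4}, apply the classical Nevanlinna--Pick theorem to $h$, and conjugate by $\mathrm{diag}(z_1^m,\dots,z_n^m)$. The only difference is that you explicitly justify $\|h\|_{\infty}\le 1$ using Proposition \ref{newlemma3} and the Schwarz lemma, a norm bound the paper asserts when invoking Lemma \ref{jay4} even though that lemma only provides analyticity of $h$ --- so your version is, if anything, slightly more complete.
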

\begin{proof}
Without loss of generality, assume that $z_i \neq 0$ for all $i = 1,2,\dotsc,n$. Assume that there exists $f \in H^{\infty}_K(\mathbb{D})$ such that $||f||_{\infty} \leq 1$ and $f(z_i) = w_i$. Set $\lambda = f(0) \in \mathbb{D}$. Define $g(z) = \varphi_{\lambda}(f(z))$. Since $\varphi_{\lambda} \in H^{\infty}(\mathbb{D})$ and $f \in H^{\infty}_K(\mathbb{D})$, it follows from Lemma \ref{jay2} that $g \in H^{\infty}_{K}(\mathbb{D})$. Moreover, $g(z_i) = \varphi_{\lambda}(w_i),$ for $i = 1, 2\dotsc, n$, and $g(0) = \varphi_{\lambda}(\lambda) = 0$.

Consequently, we know from Lemma \ref{jay4} that a smallest positive integer $m\notin K$ exists such that $g(z) = z^m h(z)$ where $||h||_{\infty} \leq 1$. Applying the Nevanlinna-Pick Theorem on $h \in H^{\infty}(\mathbb{D})$ and the set $\{z_1,z_2,\dotsc,z_n\}$, the matrix 
$$\left[\frac{1-h(z_i)\overline{h(z_j)}}{1-z_i\overline{z_j}}\right]\; \; \text{is \; positive \; semidefinite.}$$ Following a similar idea in the proof of Theorem \ref{newtheorem1}, we have that the matrix
$$\left[\frac{z^{m}_i \overline{z^{m}_j} - g(z_i) \, \overline{g(z_j)}}{1-z_i \overline{z_j}}\right] = \left[\frac{z^{m}_i \overline{z^{m}_j} - \varphi_{\lambda}(w_i) \, \overline{\varphi_{\lambda}(w_j)}}{1-z_i \overline{z_j}}\right] \; \;  \text{ is \; positive \; semidefinite,}$$
as desired.
\end{proof}
\begin{cor}\label{newcorollary2}
Let $K$ be an infinite proper subset of $\mathbb{Z}_+$ such that $H^{\infty}_K(\mathbb{D})$ is an algebra and
\begin{equation*}\label{ugly} \mathbb{Z}_+ \setminus K =
\lbrace n_1d, n_2d, \dots , n_pd, N_0d, (N_0 + j)d : j \in \mathbb{Z}_+ \rbrace
\end{equation*}
for some positive integer $d$, some finite set $\{n_i\}_{i=1}^p \subset \mathbb{Z}_+$ where \\ $n_1 < n_2 < \cdots < n_p$ and a positive integer $N_0 > n_p$. Moreover, let $K_1 \subset \mathbb{Z}_+$ such that
\begin{equation*}\label{doubleugh}
\mathbb{Z}_+\setminus K_1 = \{n_1,n_2,\dotsc,n_p, N_0, N_0 + j \, : \, j \in \mathbb{Z}_+\}.
\end{equation*}
Let $z_1, z_2, \dotsc,z_n$ be distinct points in $\mathbb{D}$ and $w_1, w_2, \dotsc, w_n$ be points in $\mathbb{D}$. If there exists an $f \in H^{\infty}_K(\mathbb{D})$ with $f(z_i) = w_i$ for $i = 1,2,\dotsc,n$ and $||f||_{\infty} \leq 1$, then there is a $\lambda \in \mathbb{D}$ such that 
$$\left[\frac{z^{n_1d}_i \overline{z_j^{n_1d}} - \varphi_{\lambda}(w_i) \overline{\varphi_{\lambda}(w_j)}}{1-z_i^{ d}\,\overline{z_j^{ d}}}\right] \; \text{ is \; positive \; semidefinite,}$$
where $n_1$ is the smallest positive integer not in $K_1$.
\end{cor}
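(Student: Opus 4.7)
The plan is to mirror the proof of Corollary \ref{newcorollary1} but in the necessary direction, substituting Theorem \ref{newtheorem3} for Theorem \ref{newtheorem2}. Given $f\in H^{\infty}_K(\mathbb{D})$ with $f(z_i)=w_i$ and $\|f\|_\infty\le 1$, the power-series representation following Lemma \ref{Kinfinite} shows that every nonzero exponent of $f$ is a multiple of $d$, so $f$ factors as $f(z)=F(z^d)$ where
$$F(V)=\alpha_0+\sum_{i=1}^{p}\alpha_i V^{n_i}+\alpha_{p+1}V^{N_0}+\sum_{j=1}^{\infty}\alpha_{p+j+1}V^{N_0+j}.$$
The nonzero exponents of $F$ lie exactly in $\mathbb{Z}_+\setminus K_1$, so $F\in H^{\infty}_{K_1}(\mathbb{D})$, and since $z\mapsto z^d$ maps $\mathbb{D}$ onto $\mathbb{D}$ we have $\|F\|_\infty=\|f\|_\infty\le 1$. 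Setting $V_i:=z_i^d\in\mathbb{D}$, we obtain $F(V_i)=w_i$.

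The multiplicative bijection $F\mapsto F(z^d)$ between $H^{\infty}_{K_1}(\mathbb{D})$ and $H^{\infty}_K(\mathbb{D})$ shows that $H^{\infty}_{K_1}(\mathbb{D})$ is itself an algebra, so Proposition \ref{newlemma1} yields $1\in K_1$, forcing $n_1\ge 2$. Consequently $K_1=\{1,2,\dotsc,N_0-1\}\setminus\{n_1,\dotsc,n_p\}$ is a finite set of the form $\{1,k_1,\dotsc,k_q\}$ required by Theorem \ref{newtheorem3}, and its smallest missing positive integer is precisely $n_1$. Applying Theorem \ref{newtheorem3} to $F$ at the nodes $V_i$ with values $w_i$ produces $\lambda\in\mathbb{D}$ such that
$$\left[\frac{V_i^{n_1}\,\overline{V_j^{n_1}}-\varphi_\lambda(w_i)\,\overline{\varphi_\lambda(w_j)}}{1-V_i\,\overline{V_j}}\right]$$
is positive semidefinite. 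Resubstituting $V_i=z_i^d$ gives $V_i^{n_1}=z_i^{n_1 d}$ and $V_i\overline{V_j}=z_i^{d}\,\overline{z_j^{d}}$, which is exactly the matrix in the conclusion.

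The main technical nuisances are (i) confirming that $H^{\infty}_{K_1}(\mathbb{D})$ satisfies the algebra hypotheses of Theorem \ref{newtheorem3}, which is handled via the multiplicative bijection together with Proposition \ref{newlemma1}, and (ii) handling possible collisions $z_i^d=z_j^d$ among distinct nodes $z_i$. Any such collision forces $w_i=F(V_i)=F(V_j)=w_j$, so the interpolation problem for $F$ descends to one on the reduced set of distinct $V_i$'s, and positive semidefiniteness of the resulting smaller Pick-type matrix implies positive semidefiniteness of the original one by inserting the duplicated rows and columns. With these points addressed, the corollary follows directly from Theorem \ref{newtheorem3}.
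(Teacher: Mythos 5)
Your proof is correct and follows essentially the same route as the paper's: factor $f(z)=F(z^d)$ via the power-series representation, observe $F\in H^{\infty}_{K_1}(\mathbb{D})$ with $\|F\|_\infty\le 1$ and $F(z_i^d)=w_i$, apply Theorem \ref{newtheorem3}, and resubstitute $V_i=z_i^d$. You additionally justify two points the paper leaves implicit — that $H^{\infty}_{K_1}(\mathbb{D})$ is an algebra of the form required by Theorem \ref{newtheorem3} (via the substitution bijection and Proposition \ref{newlemma1}), and the possible coincidence of nodes $z_i^d=z_j^d$ — both of which are handled correctly and strengthen the argument.
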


\begin{proof}
Let $f \in H^{\infty}_K(\mathbb{D})$ such that $f(z_i) = w_i$ and $||f||_{\infty} \leq 1$. 
By our assumption on $\mathbb{Z}_+\setminus K$, we see that $f$ has the following representation
$$f(z) = \alpha_0 + \alpha_{1}\,z^{n_1d} + \alpha_{2}\,z^{n_2d} + 
\dots + \alpha_{p}\,z^{n_pd}  + \alpha_{p+1}\,z^{N_0d} + \sum_{j=1}^{\infty}\alpha_{p+j+1}\,z^{(N_0+j)d},$$
where $\alpha_i \in \mathbb{C}$. Setting $V = z^d$, we have a function $F$ defined by
$$F(V) = \alpha_0 + \alpha_1 \, V^{n_1} + \alpha_2 \, V^{n_2} + \cdots +\alpha_p\,V^{n_p} + \alpha_{p+1}\,V^{N_0} +  \sum_{j=1}^{\infty} \alpha_{p+j+1}\, V^{N_0+j}.$$
Consequently, $F$ is contained in the algebra $H^{\infty}_{K_1}(\mathbb{D})$. 
Also, $||F||_{\infty} \leq 1$ since $||f||_{\infty} \leq 1$, and $F(V_i) = F(z^d_i) = f(z_i) = w_i$. By Theorem \ref{newtheorem3}, there exists a $\lambda \in \mathbb{D}$ such that 
$$\left[\frac{V^{n_1}_i \, \overline{V^{n_1}_j} - \varphi_{\lambda}(V_i) \overline{\varphi_{\lambda}(V_j)}}{1-V_i \, \overline{V_j}}\right] \; \; \text{is \; positive \; semidefinite}$$
where $n_1$ is the smallest positive integer not in $K_1$. Since $V_i = z^d_i$ and $\overline{V_j} = \overline{z^d_j}$, we conclude that
$$\left[\frac{z^{n_1d}_i \overline{z_j^{n_1d}} - \varphi_{\lambda}(w_i) \overline{\varphi_{\lambda}(w_j))}}{1-z_i^{d}\,\overline{z_j^{d}}}\right] \; \text{ is \; positive \; semidefinite}.$$
\end{proof}
\end{section}

\begin{section}{Conclusion and Open Problems}
In this paper, we have established several interpolation results for $H^{\infty}_K(\mathbb{D})$. In particular, we have generalized the interpolation result in \cite{david}, and have provided a couple theorems that serve as necessary and sufficient conditions for an interpolation function to exist. However, questions  have naturally arisen during this investigation.

First, we gave two different theorems on the existence of an interpolation function in $H^{\infty}_K(\mathbb{D})$ when $K$ is an arbitrary set of positive integers such that $H^{\infty}_K(\mathbb{D})$ is an algebra. However, it is not clear if there is a necessary and sufficient condition for an interpolant to exist in algebras such as $H^{\infty}_{\{1,3\}}(\mathbb{D})$. Thus, we ask the following question.
\begin{question}
\normalfont Is there a generalization of Theorem \ref{newtheorem1} for algebras $H^{\infty}_K(\mathbb{D})$ where $K$ is a collection of nonconsecutive integers?
\end{question}
 Furthermore, the interpolation function in the classical Nevanlinna-Pick interpolation result is unique if and only if the associated Pick matrix is singular. This fact inspires the following question.
\begin{question}
\normalfont Are there necessary and sufficient conditions for Theorem \ref{newtheorem1} to guarantee that the the interpolation function in $H^{\infty}_K(\mathbb{D})$ is unique?
\end{question}
Lemma \ref{jay2} shows that if $g \in H^{\infty}(\mathbb{D})$ and $f \in H^{\infty}_K(\mathbb{D})$ where $f(\mathbb{D})\subset\mathbb{D}$, then the composition $g \circ f \in H^{\infty}_K(\mathbb{D})$. We thus pose the following question.
\begin{question}
\normalfont Suppose $g \in H^{\infty}_K(\mathbb{D})$ and $f \in H^{\infty}(\mathbb{D})$ with $f(\mathbb{D}) \subset \mathbb{D}$. Under what conditions is the composition $g\circ f\in H^{\infty}_K(\mathbb{D})$?
\end{question}
\end{section}
\noindent\textbf{Acknowledgements:} The authors would like to thank A. Incognito and Tavan T. Trent  for important feedback on the first version of this manuscript. 

\bigskip

\end{document}